\numberwithin{equation}{section}
\newtheorem{notation}[theorem]{Notation}
\newtheorem*{Example}{Example}
\begin{document}
\pagenumbering{arabic}
\setlength{\belowdisplayskip}{0pt}

\title{Estimates of constants in the limit theorems for chaotic dynamical systems}


\author{Leonid A. Bunimovich  \and
        Yaofeng Su
}


\institute{Leonid A. Bunimovich \at
              School of Mathematics, Georgia Institute of Technology,  Atlanta, USA\\
             \email{leonid.bunimovich@math.gatech.edu}           
           \and
            Yaofeng Su  \at
              School of Mathematics, Georgia Institute of Technology,  Atlanta, USA\\
              \email{yaofeng.su@math.gatech.edu} 
}

\date{Received: date / Accepted: date}

\maketitle

\begin{abstract}
In a vast area of probabilistic limit theorems for dynamical systems with chaotic behaviors always only functional form (exponential, power, etc) of the asymptotic laws and of convergence rates were studied. However, for basically all applications, e.g., for computer simulations, development of algorithms to study chaotic dynamical systems numerically, as well as for design and analysis of real (e.g., in physics) experiments, the exact values (or at least estimates) of constants (parameters) of the functions, which appear in the asymptotic laws and rates of convergence, are of primary interest. In this paper we provide such estimates of constants (parameters) in the central limit theorem, large deviations principle, law of large numbers and the rate of correlations decay for strongly chaotic dynamical systems.

\end{abstract}

\tableofcontents

\section{Introduction}\ \par
Dynamical systems with an irregular (``complex") behavior (loosely called chaotic systems) were discovered in the last century. Their evolution (in ``time") looks similar to the evolution of random processes. Since the remarkable Kolmogorov's paper \cite{K} which built a ``bridge" between the worlds of deterministic (dynamical) systems and random systems (stochastic processes),
a natural question was raised whether some classes of chaotic dynamical systems satisfy limit theorems known for stochastic processes.
One should mention that ergodicity (which is equivalent to the strong law of large numbers) was established for various dynamical systems. However, non-chaotic systems can be ergodic too. Perhaps, the most famous (and somewhat simple) example of ergodic, but non-chaotic, system is the irrational rotation of a circle. Therefore one should generally impose on a dynamical system some stronger, than ergodicity, statistical properties. 
Starting with the second half of the last century studies of limit theorems for chaotic dynamical systems became one of the most active areas in the theory of dynamical systems. The results obtained in this area allowed for many breakthroughs in various applications of dynamical systems. One of the most explored areas was related to the problem of the existence of various transport coefficients in nonequilibrium statistical mechanics, which requires estimations of the rates of correlations decay for some meaningful observables.
The results in this area deal with establishing a functional form of the limits of various averages along the orbits of dynamical systems,  or of the rates of convergence to these limits. For instance, for existence of transport coefficients it is enough to prove that correlations of some specific  functions (observables) on the phase space of a system in question are integrable over an infinite time interval, i.e., the correlations decay sufficiently fast. 
All estimates in these limit theorems contain various constants, e.g., the exponential estimate looks like $a\cdot \exp(b\cdot t)$. However, what are the values of $a$ and $b$ in this expression? Such questions are of importance for applications of the dynamical systems theory in any areas of science and technology, particularly for numerical studies and simulations of chaotic dynamical systems, for developing the corresponding algorithms, and even for building relevant devices for experimental studies of chaotic systems.
In the present paper we obtain estimates of constants appearing in main limit theorems of probability theory like the central limit theorem, the large deviation principle, the law of large numbers and the rate of correlations decay. 

The organization of the paper: in section 2 we first introduce the definitions, notations, and main theorems. In section 3, we prove these theorems. We conclude this paper with applications in section 4.


\section{Definitions and Notations}\label{positive}

\begin{definition}[Topological Markov shifts]\label{shift}
    The phase space $\mathbb{A}$ of a topological Markov shift with finite states space $S$ is defined as \[\mathbb{A}:=\{(x_0, x_1, x_2 \cdots): x_i \in S, \quad \#S< \infty\}.\]

The dynamics on $\mathbb{A}$ is the left shift $\sigma: \mathbb{A}\to \mathbb{A}: \sigma(x_0, x_1, \cdots) = (x_1, x_2, \cdots)$.
Let there is $\theta \in (0,1)$, such that for any two points $x,y\in \mathbb{A}$, the distance $d(x,y)$ between them equals \[d(x,y):=\theta^{\inf\{n\ge 0:\text{ } x_n\neq y_n\}}.\] 

Consider a potential $\phi_p: \mathbb{A}\to \mathbb{R}$, which is a Lipshitz function. The last means that there is $||\phi_p||\ge 1$, such that \[||\phi_p||_{\infty}\le ||\phi_p||, \quad |\phi_p(x)-\phi_p(y)|\le ||\phi_p||d(x,y).\]  

We also suppose that $\phi_p$ is normalized, i.e., it satisfies the relation \[\sum_{a \in S}e^{\phi_p(ax)}=1 \text{ for any } x \in \mathbb{A}.\]

    An equilibrium state is a probability measure $\mu_{\phi_p}$, which is shift-invariant (i.e. $\mu_{\phi_p}\circ \sigma^{-1}=\mu_{\phi_p}$). Denote the expectation with respect to $\mu_{\phi_p}$ by $\mathbb{E}$.

    Next, a transfer operator is defined as \[P(\phi)(x)=\sum_{a \in S} e^{\phi_p(ax)}\phi(ax)\]

    In what follows, we will always denote an observable for the dynamical system $(\mathbb{A}, \sigma, \mu_{\phi_p})$  by $\phi: \mathbb{A} \to \mathbb{R}$. We also assume that all observables under consideration are Lipschitz, i.e., there exists $||\phi||\ge 1$, such that \[||\phi||_{\infty}\le ||\phi||, \quad|\phi(x)-\phi(y)|\le ||\phi||d(x,y).\]

\end{definition}

\begin{remark}
    For uniformly hyperbolic (e.g., the Axiom A) systems, $\theta$ is the rate of expansion along unstable manifolds, and $||\phi_p||$ is the constant appearing in the expression of the distortion (ratio of the expansion rates) along unstable manifolds.
\end{remark}
\begin{remark}
    To estimate the constants appearing in the exponential mixing rates we will use the maximal coupling techniques \cite{max}.
\end{remark}

The main results of the present paper are listed below.

\begin{theorem}[Estimates of an error in the rate of correlations decay] \label{correlation}\ \par
Consider the topological dynamical system in Definition \ref{shift}. We have
\[||P^n(\phi-\mathbb{E}\phi)||_{\infty} \le \frac{(\sqrt{z_0})^{-n}}{(\log\sqrt{z_0})(1-a-\epsilon)}||\phi||,\]

where $\epsilon \in (0, \min\{1-a, 1-\theta\})$, $a=1-\exp(-||\phi_p||(1-\theta)^{-1})$ and 
\[z_0 \in \Big(1, \min\Big\{\Big(\frac{a+\epsilon/2}{a}\Big)^{U}, \frac{\theta+\epsilon}{\theta}, \frac{1-\epsilon}{\theta}, e^2\Big\}\Big), \quad U:=\frac{\log (1-\epsilon)}{\log [\theta \epsilon(1-\epsilon-\theta)]-\log (2||\phi_p||)}\] 
\end{theorem}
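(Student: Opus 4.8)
The plan is to reduce the sup-norm bound to a pointwise comparison of $P^n\phi$ at two points, run a maximal coupling of the word-prepending process attached to $P$, and then extract genuine geometric decay from the fact that, under this process, disagreements are pushed to the back of the coordinate string so that leading coordinates agree with controllably high probability; the explicit constants $a$, $z_0$, $U$ then emerge from optimizing the resulting estimate.

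First I would record the reduction. Since $\phi_p$ is normalized, the constant function $1$ satisfies $P1=1$ and $\mu_{\phi_p}$ is stationary for $P$ in the sense that $\int P^n\psi\,d\mu_{\phi_p}=\int \psi\,d\mu_{\phi_p}$; in particular $\mathbb{E}\phi=\int P^n\phi\,d\mu_{\phi_p}$ and hence
\[
P^n(\phi-\mathbb{E}\phi)(x)=\int \big(P^n\phi(x)-P^n\phi(y)\big)\,d\mu_{\phi_p}(y).
\]
So it suffices to bound $|P^n\phi(x)-P^n\phi(y)|$ uniformly in $x,y$. Writing $P^n\phi(x)=\sum_{|w|=n}e^{\phi_p^{(n)}(wx)}\phi(wx)$, where $\phi_p^{(n)}$ is the $n$-th Birkhoff sum and the weights $p_n(\cdot,x):=e^{\phi_p^{(n)}(\,\cdot\,x)}$ sum to $1$, this realizes $P^n\phi(x)$ as the expectation of $\phi$ under the law of the length-$n$ word prepended to $x$.

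Next I would set up the coupling. I couple $p_n(\cdot,x)$ and $p_n(\cdot,y)$ by a maximal coupling carried out one prepended symbol at a time. The basic distortion input is that if the two current states agree on their first $j$ coordinates then they are at distance $\le\theta^{j}$, so the overlap at that step is $\ge e^{-\|\phi_p\|\theta^{j}}$; telescoping the products of these factors over a block gives the total-distortion bound $\|\phi_p\|\sum_{j\ge 0}\theta^{j}=\|\phi_p\|(1-\theta)^{-1}$, which is precisely where $a=1-\exp(-\|\phi_p\|(1-\theta)^{-1})$ enters. The structural point that makes decay possible is that a symbol prepended at step $j$ lands at coordinate $n-j$ of the final point; hence the coupled points $Wx$ and $W'y$ agree on exactly as many leading coordinates as the length of the final run of successful couplings. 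If $R$ denotes the most recent disagreement step, then $d(Wx,W'y)\le\theta^{\,n-R}$, and because $\phi$ is Lipschitz, $|\phi(Wx)-\phi(W'y)|\le\|\phi\|\,\theta^{\,n-R}$ on that event.

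The crux is to combine these two mechanisms quantitatively. The process is self-healing: after a disagreement at step $i$, the subsequent inter-state distances are $\le\theta,\theta^2,\dots$, so later failure probabilities are $\le 1-e^{-\|\phi_p\|\theta^{m}}$ and are summable and small, forcing disagreements to be sparse and the last one to be early with high probability. I would therefore bound $|P^n\phi(x)-P^n\phi(y)|\le \|\phi\|\,\mathbb{E}[\theta^{\,n-R}]$ and split the expectation at a cutoff whose location is governed by matching the spatial rate $\theta$ against the probabilistic rate $a$: a disagreement tail of geometric ratio controlled by $a+\epsilon$ produces the factor $(1-a-\epsilon)^{-1}$ from summing the corresponding series, while the requirement that this matching be consistent is exactly what forces $\epsilon\in(0,\min\{1-a,1-\theta\})$ and the four threshold constraints on $z_0$ (with $U$ the log-ratio converting the coupling scale into the spatial scale). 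Passing from the resulting geometric sum, whose ratio is $z_0^{-1/2}$, to the stated closed form uses $1-e^{-\beta}<\beta$ with $\beta=\log\sqrt{z_0}$ (the condition $z_0<e^2$ keeps $\beta<1$), giving the refined tail $z_0^{-n/2}/\log\sqrt{z_0}$ in place of the cruder $\sum_{m\ge n}z_0^{-m/2}$.

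The main obstacle is precisely this last extraction of geometric decay: a single maximal coupling over all $n$ symbols yields only a uniform, non-decaying bound, since early disagreements are never erased, merely pushed back. The decay must be recovered by tracking the length of the coupled leading block, quantifying the self-healing so that the position $R$ of the last disagreement is geometrically concentrated near $0$, and then balancing $\theta^{\,n-R}$ against the disagreement probabilities. Getting this balance and the bookkeeping of all constants correct — the emergence of $a$, the admissible window for $\epsilon$, the definition of $U$ as the ratio ensuring convergence of the competing series, and the optimal $z_0$ — is where essentially all the work of the theorem lies.
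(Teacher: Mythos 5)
Your high-level strategy is the same as the paper's: the paper's proof consists of citing Theorem~1 of the maximal-coupling reference, which yields exactly the structure you describe, namely $\|P^n(\phi-\mathbb{E}\phi)\|_\infty\le\|\phi\|\sum_{k=0}^n\theta^k\gamma^*_{n-k}$, where $\gamma^*_{m}$ is the probability that the auxiliary chain recording ``time since the last coupling failure'' sits at $0$ at time $m$ (your $\mathbb{E}[\theta^{\,n-R}]$ is this sum). The identification of $a=1-\exp(-\|\phi_p\|(1-\theta)^{-1})$ as the total probability of ever having another disagreement is also correct.

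However, there is a genuine gap at the decisive quantitative step, which you explicitly defer (``where essentially all the work of the theorem lies''). To conclude, one must prove that $\gamma^*_m=\mathbb{P}(S_m=0)$ decays like $z_0^{-m}/(1-a-\epsilon)$ for the \emph{specific} $z_0$ in the statement, and neither ``self-healing'' nor ``geometric concentration of $R$ near $0$'' delivers this without the following mechanism, which is the actual content of the paper's proof: the renewal identity $\sum_k\mathbb{P}(S_k=0)z^k=\bigl(1-\sum_{k\ge1}\mathbb{P}(\tau=k)z^k\bigr)^{-1}$ for the first return time $\tau$, together with the explicit verification that $\sum_{k\ge1}\mathbb{P}(\tau=k)z_0^k\le a+\epsilon$. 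That verification is done by splitting the sum at $N=U^{-1}$, bounding the tail via $\mathbb{P}(\tau=k)\le\|\phi_p\|\theta^{k-1}$ (whence the constraints $z_0\theta\le 1-\epsilon$ and $z_0\theta\le\theta+\epsilon$, and the definition of $U$, which forces the tail below $\epsilon/2$) and the head via $\sum_k\mathbb{P}(\tau=k)z_0^N\le a\,z_0^N\le a+\epsilon/2$ (whence the constraint $z_0\le((a+\epsilon/2)/a)^U$). Without this computation none of the stated constants are derived, so the proposal is a roadmap rather than a proof. A secondary caution: your appeal to the per-step failure probabilities being ``summable and small'' is misleading --- $\sum_m\gamma_m\le\|\phi_p\|(1-\theta)^{-1}$ need not be small at all (in the paper's application $a=0.9999$); what saves the argument is only that the total return probability $a$ is strictly less than $1$, fed into the generating-function identity above.
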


\begin{theorem}[Estimates of an error in the CLT]\label{clt}\ \par
Consider the topological dynamical system in Definition \ref{shift}. Suppose that the observable $\phi$ has zero mean, and let $\Sigma^2:=\mathbb{E}\phi^2+2\sum_{i\ge 1}\mathbb{E}\phi \phi^i$. Then for any $t\in \mathbb{R}$ 
    \[\Big|\mathbb{E}\exp\Big(it \frac{\sum_{0 \le i \le n-1}\phi\circ \sigma^i}{\sqrt{n}}\Big)-\exp\Big(-\frac{t^2\Sigma^2}{2}\Big)\Big|\le C_{t,n}, \]
    where\begin{align*}C_{t,n}=&6922\Big(\frac{|t|^4||\phi||^4}{n^{0.1}(1-z_0^{-0.5})^4(1-a-\epsilon)^4(\ln \sqrt{z_0})^4}+\frac{t^2z_0^{-0.5n}||\phi||^2}{(1-z_0^{-0.5})(1-a-\epsilon)\ln \sqrt{z_0}}\Big)\\
    &+64\Big(\frac{t^2n^3||\phi||^2e^{2||\phi_p||\theta(1-\theta)^{-1}}||\phi_p||^2(\sqrt{z_0})^{-n^{0.2}}}{(1-\theta)^2(1-a-\epsilon)\ln \sqrt{z_0}}\Big),
    \end{align*} and $a, \epsilon, z_0$ are the ones in Theorem \ref{correlation}.
\end{theorem}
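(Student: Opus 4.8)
The plan is to estimate the characteristic function $\varphi_n(t):=\mathbb{E}\exp(it S_n/\sqrt n)$, where $S_n:=\sum_{0\le i\le n-1}\phi\circ\sigma^i$, and to compare it with the Gaussian $\exp(-t^2\Sigma^2/2)$, routing every error through the explicit correlation bound of Theorem~\ref{correlation}. The latter is rewritten, via the transfer-operator duality $\mathbb{E}[\phi\cdot\psi\circ\sigma^i]=\int\psi\,P^i\phi\,d\mu_{\phi_p}$ together with $\mathbb{E}\phi=0$ (so $P^i\phi=P^i(\phi-\mathbb{E}\phi)$), as $|\mathbb{E}\phi\phi^i|\le\|\phi\|_\infty\|P^i\phi\|_\infty$. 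Because $S_n/\sqrt n$ is of order one, $\exp(itS_n/\sqrt n)$ cannot be Taylor expanded globally; so I would use a Bernstein big-block/small-block decomposition of $\{0,\dots,n-1\}$ into $k\approx n^{0.1}$ big blocks of length $L\approx n^{0.9}$ separated by gaps of length $g\approx n^{0.2}$. Writing $U_j$ for the sum of $\phi\circ\sigma^i$ over the $j$-th big block, the three summands of $C_{t,n}$ will correspond respectively to (i) a per-block second/fourth order Taylor expansion, (ii) the identification of $\frac1n\sum_j\mathbb{E}U_j^2$ with $\Sigma^2$, and (iii) replacing $\varphi_n$ by the product $\prod_j\mathbb{E}\exp(itU_j/\sqrt n)$, i.e. decoupling across gaps.

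First I would handle the variance, which yields the middle term. Since $\mathrm{Var}(S_n)/n=\mathbb{E}\phi^2+2\sum_{i=1}^{n-1}(1-i/n)\mathbb{E}\phi\phi^i$, the difference $\Sigma^2-\mathrm{Var}(S_n)/n$ is the tail $2\sum_{i\ge n}\mathbb{E}\phi\phi^i$ plus the $O(1/n)$ correction $2\sum_{i<n}(i/n)\mathbb{E}\phi\phi^i$; bounding $|\mathbb{E}\phi\phi^i|\le \|\phi\|^2 z_0^{-i/2}/[(\ln\sqrt{z_0})(1-a-\epsilon)]$ and summing the geometric series produces precisely the factor $z_0^{-0.5n}\|\phi\|^2/[(1-z_0^{-0.5})(1-a-\epsilon)\ln\sqrt{z_0}]$, with the $t^2$ weight coming from the second-order Taylor coefficient. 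Next, for the decoupling step I would peel off one big block at a time and apply Theorem~\ref{correlation} across each gap of length $g$; this generates the stretched-exponential factor $(\sqrt{z_0})^{-n^{0.2}}$, while the polynomial weight $n^3$ and the distortion constants $e^{2\|\phi_p\|\theta(1-\theta)^{-1}}$, $\|\phi_p\|^2$, $(1-\theta)^{-2}$ arise from the sup-norms and Lipschitz constants of the $k$ block observables $\exp(itU_j/\sqrt n)$.

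For the main term I would expand each factor as $\mathbb{E}\exp(itU_j/\sqrt n)=1-\tfrac{t^2}{2n}\mathbb{E}U_j^2+R_j$ with a fourth-order remainder $|R_j|\lesssim t^4\mathbb{E}U_j^4/n^2$, bound $\mathbb{E}U_j^4$ by iterating the correlation estimate over the four summation indices (which is where the fourth power $\|\phi\|^4(1-z_0^{-0.5})^{-4}(\ln\sqrt{z_0})^{-4}(1-a-\epsilon)^{-4}$ is born), and then pass from $\prod_j(1-\tfrac{t^2}{2n}\mathbb{E}U_j^2)$ to $\exp(-\tfrac{t^2}{2n}\sum_j\mathbb{E}U_j^2)$. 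Summing the $k\approx n^{0.1}$ remainders against the per-block size $(L/n)^2\approx n^{-0.2}$ gives the overall rate $n^{-0.1}$. The triangle inequality over the three sources, together with the absolute numerical constants $6922$ and $64$ absorbing the combinatorial overhead, then assembles $C_{t,n}$.

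The hard part will be the decoupling step, i.e. making ``approximate independence of big blocks'' quantitative with the tools at hand. Theorem~\ref{correlation} controls $P^n$ applied to a single Lipschitz observable, whereas here I must estimate $|\mathbb{E}\prod_j e^{itU_j/\sqrt n}-\prod_j\mathbb{E}e^{itU_j/\sqrt n}|$, where each $e^{itU_j/\sqrt n}$ is a nonlinear function of a sum of $\approx n^{0.9}$ coordinates. The naive Lipschitz constant of $U_j$ grows like $\theta^{-L}$ (since $\mathrm{Lip}(\phi\circ\sigma^i)\le\|\phi\|\theta^{-i}$), so one cannot simply feed $e^{itU_j/\sqrt n}$ into Theorem~\ref{correlation}; the regularity must instead be tamed through the normalized-potential/distortion structure (bounded distortion $\le e^{\|\phi_p\|/(1-\theta)}$), which is exactly why the estimate carries $e^{2\|\phi_p\|\theta(1-\theta)^{-1}}$ and why the gaps must be taken as long as $n^{0.2}$ rather than $O(1)$. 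Keeping the accumulated decoupling error over all $k$ blocks below the target while retaining fully explicit constants is the delicate bookkeeping that the proof must carry out.
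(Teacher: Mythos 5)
Your plan is the classical Bernstein big-block/small-block scheme applied directly to the Birkhoff sums, and it founders exactly where you say it does: the decoupling step. Theorem~\ref{correlation} bounds $\|P^m(g-\mathbb{E}g)\|_\infty$ only in terms of the Lipschitz norm of $g$, and the block observables $e^{itU_j/\sqrt n}$ (or even $U_j$ itself) have Lipschitz constants of order $\theta^{-L}$ with $L\approx n^{0.9}$, which swamps the gain $(\sqrt{z_0})^{-n^{0.2}}$ from a gap of length $n^{0.2}$. You acknowledge this and appeal to ``taming the regularity through the distortion structure,'' but you never supply the mechanism (e.g.\ replacing each block observable by a cylinder approximation supported on finitely many coordinates and re-running a mixing estimate for such functions), and without it the factor $(\sqrt{z_0})^{-n^{0.2}}$ in $C_{t,n}$ is not obtained. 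The same issue recurs in your fourth-moment bound $\mathbb{E}U_j^4$: ``iterating the correlation estimate over the four summation indices'' requires multiple-correlation bounds for products like $\phi\cdot(\phi\circ\sigma^j)$, whose Lipschitz norms again blow up like $\theta^{-j}$, so this needs a separate iterated transfer-operator argument that you do not give. Since the whole point of the theorem is a fully explicit constant, these two unexecuted steps are the proof, not bookkeeping.

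The paper avoids both obstacles by a genuinely different route: a Gordin-type martingale approximation. It sets $H_n\circ\sigma^n=\mathbb{E}[\sum_{i\le n-1}\phi\circ\sigma^i\,|\,\sigma^{-n}\mathcal{B}]$ and $\psi_n\circ\sigma^n=\phi\circ\sigma^n+H_n\circ\sigma^n-H_{n+1}\circ\sigma^{n+1}$, so that the $\psi_n\circ\sigma^n$ are reverse martingale differences; the blocking ($n^{0.2}$ blocks of length $n^{0.8}$, with sub-blocks of length $n^{0.2}$ discarded) is then performed on the $\psi$'s, not on $\phi$. Because block sums of martingale differences are conditionally centered, no multiplicative decoupling of characteristic functions is needed; the third/fourth moments come from Burkholder--Davis--Gundy plus the uniform bound $\|\psi_n\|_\infty\le 3\|\phi\|/[(1-z_0^{-0.5})(1-a-\epsilon)\ln\sqrt{z_0}]$; and the only place Theorem~\ref{correlation} is invoked across a gap is to control the conditional variances $\mathbb{E}|\mathbb{E}[X_i^2-\mathbb{E}X_i^2\,|\,\mathcal{F}_i]|$, applied to $\psi_k^2$, whose Lipschitz norm is explicitly bounded by $O(n)\cdot e^{\|\phi_p\|\theta(1-\theta)^{-1}}\|\phi_p\|(1-\theta)^{-1}$ via the distortion estimate for $P^n\phi$ --- this is the true origin of the factor $n^3\,e^{2\|\phi_p\|\theta(1-\theta)^{-1}}\|\phi_p\|^2(1-\theta)^{-2}(\sqrt{z_0})^{-n^{0.2}}$, and the $t^2z_0^{-0.5n}$ term comes from the coboundary $H_{N+1}$ in the variance identification, not from a tail of $\sum_i\mathbb{E}\phi\phi^i$ as you propose. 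If you want to salvage your route you must prove a quantitative decoupling lemma for functions of finitely many coordinates; otherwise the martingale reduction is the missing idea.
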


\begin{theorem}[Estimates of an error in the large deviations principle] \label{deviation}\ \par
Consider the topological dynamical system in Definition \ref{shift}.
Suppose again that $\phi$ has zero mean. Then for any $u>0$
    \begin{align*}
 &\mu_{\phi_p}\Big(\Big|\frac{\sum_{i \le n-1}\phi \circ \sigma^i}{n}\Big|\ge u\Big) \\
    &\le 2\exp \Big(\frac{u(1-z_0^{-0.5})(1-a-\epsilon)(\ln \sqrt{z_0})}{36||\phi||z_0^{0.5}}\Big)\exp\Big(-\frac{u^2n (1-z_0^{-0.5})^2(1-a-\epsilon)^2(\ln \sqrt{z_0})^2}{72||\phi||^2}\Big),
    \end{align*}
    where $a, \epsilon, z_0$ are the ones in Theorem \ref{correlation}.
\end{theorem}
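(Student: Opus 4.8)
The plan is to run the Cram\'er--Chernoff exponential--moment method, feeding the decay of correlations from Theorem~\ref{correlation} into a moment generating function bound for $S_n:=\sum_{0\le i\le n-1}\phi\circ\sigma^i$ with fully explicit constants. First I would discard the absolute value: since $-\phi$ is again a zero--mean Lipschitz observable with $\|-\phi\|=\|\phi\|$, a union bound gives $\mu_{\phi_p}(|S_n|\ge nu)\le \mu_{\phi_p}(S_n\ge nu)+\mu_{\phi_p}(-S_n\ge nu)$, so it is enough to bound one side and double it, which is the source of the prefactor $2$. For the one--sided event, Markov's inequality applied to $e^{sS_n}$ gives, for every $s>0$,
\[
\mu_{\phi_p}(S_n\ge nu)\le e^{-snu}\,\mathbb{E}\,e^{sS_n},
\]
so the whole problem reduces to an upper bound on the moment generating function $\mathbb{E}\,e^{sS_n}$.

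The heart of the argument is a sub--Gaussian estimate of the shape $\mathbb{E}\,e^{sS_n}\le \exp(Kns^2+Ls)$ valid for all $s>0$, with an explicit variance proxy $K$ and an explicit linear coefficient $L$, where $C_1:=(1-z_0^{-0.5})(1-a-\epsilon)\ln\sqrt{z_0}$. This is exactly where Theorem~\ref{correlation} enters. Rewriting its bound as $\|P^m(\phi-\mathbb{E}\phi)\|_\infty\le (1-z_0^{-0.5})z_0^{-m/2}\|\phi\|/C_1$ and summing over $m\ge1$ yields the correlation sum
\[
\Lambda:=\sum_{m\ge1}\|P^m(\phi-\mathbb{E}\phi)\|_\infty\le z_0^{-0.5}\|\phi\|/C_1,
\]
which is finite; hence $g:=\sum_{m\ge1}P^m\phi$ is a well--defined bounded function with $\|g\|_\infty\le\Lambda$. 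I would then use Gordin's martingale approximation to write $S_n=M_n+(g-g\circ\sigma^n)$ with $M_n$ a (reverse) martingale. The coboundary is bounded pointwise by $2\|g\|_\infty\le 2\Lambda$, which for $s>0$ produces a linear term of order $\Lambda s$, while an Azuma--Hoeffding exponential inequality for the martingale supplies $\mathbb{E}\,e^{sM_n}\le\exp(Kns^2)$ with a variance proxy $K$ of order $\|\phi\|^2/C_1^2$ governed by the same correlation sum. Because both ingredients hold for all $s>0$, a single clean formula will hold for every $u>0$.

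With the moment generating function bound in hand, I would insert it into the Chernoff inequality and minimise $-snu+Kns^2+Ls$ over $s$. Its exact minimiser is $s^\ast=(nu-L)/(2Kn)$, and substituting it gives the exponent $-(nu-L)^2/(4Kn)\le -nu^2/(4K)+Lu/(2K)$. One then checks that the sharp constants $K=18\|\phi\|^2/C_1^2$ and $L=z_0^{-0.5}\|\phi\|/C_1$ turn this into exactly $-u^2nC_1^2/(72\|\phi\|^2)+uC_1/(36\|\phi\|z_0^{0.5})$, so these are precisely the constants the moment estimate must deliver; doubling for the two--sided bound then yields the statement.

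The main obstacle I expect is the explicit, global--in--$s$ moment generating function bound of the second paragraph. Unlike the central limit theorem of Theorem~\ref{clt}, where the characteristic--function argument enjoys the uniform bound $|e^{it\phi}|=1$, here $e^{s\phi}$ grows with $s$, so a direct transfer--operator expansion of $\log\mathbb{E}\,e^{sS_n}$ degrades as $s$ increases; the martingale approximation is what tames this, trading the unbounded exponential for bounded increments. The delicate point is quantitative: a crude Azuma bound using only $\|m\|_\infty$ is too lossy to reach the target $K=18\|\phi\|^2/C_1^2$, so one must tie the variance proxy directly to the correlation sum $\Lambda$ rather than to the pointwise bound on the martingale differences, and likewise express $\|g\|_\infty$ and the coboundary purely through $\|\phi\|$ and the constants of Theorem~\ref{correlation}. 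Carrying the numerical factors (the $36$ and $72$) through this reduction while preserving the correct dependence on $C_1$ and $z_0$ is the computation--heavy core of the proof.
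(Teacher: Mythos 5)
Your route is essentially the paper's: both hinge on the martingale approximation of $S_n$ by the reverse martingale differences $\psi_i\circ\sigma^i$, whose uniform bound $3\|\phi\|/C_1$ (with $C_1:=(1-z_0^{-0.5})(1-a-\epsilon)\ln\sqrt{z_0}$, from (\ref{2})--(\ref{3})) feeds an Azuma--Hoeffding estimate; your Chernoff optimisation of $\mathbb{E}e^{sM_n}\le e^{Kns^2}$ is precisely the proof of the concentration inequality the paper imports from its Azuma reference, and the $72=8\cdot 3^2$ arises identically. Where you genuinely differ is the origin of the prefactor $\exp\bigl(uC_1/(36\|\phi\|z_0^{0.5})\bigr)$: the paper splits off the event $\{|H_n\circ\sigma^n|/n\ge u/2\}$, observes it is empty once $n>2\Lambda/u$ with $\Lambda:=z_0^{-0.5}\|\phi\|/C_1$, and for smaller $n$ notes the prefactor makes the right-hand side exceed $1$; you instead absorb the correction as a linear term $Ls$ in the moment generating function before optimising. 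That works, but with your symmetric Gordin coboundary $g-g\circ\sigma^n$ the linear coefficient is $2\|g\|_\infty\le 2\Lambda$ rather than $\Lambda$, which turns the $36$ in the prefactor into an $18$ and so yields a slightly weaker inequality than the one stated; to get $L=\Lambda$ you need the paper's non-stationary decomposition (\ref{1}), in which the correction is the single endpoint term $H_{n}\circ\sigma^{n}$ because $H_0=0$. Two further small points: your worry that a sup-norm Azuma bound is ``too lossy'' to reach $K=18\|\phi\|^2/C_1^2$ is unfounded, since conditional Hoeffding with increment bound $3\|\phi\|/C_1$ already gives $K=4.5\|\phi\|^2/C_1^2$; and the optimiser $s^{\ast}=(nu-L)/(2Kn)$ is admissible only when $nu>L$, while for $nu\le L$ the claimed right-hand side exceeds $1$ and the bound is vacuous --- this is the same case split the paper makes explicit and should be recorded in your write-up.
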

\begin{remark}
    Note that a stronger form of the large deviation principle is proved in \cite{mldp}.
\end{remark}

\begin{theorem}[Estimates of an error in the law of large numbers]\label{largenumber}\ \par
Consider the topological dynamical system in Definition \ref{shift}. Suppose that $\phi$ has zero mean. Then there exists a sufficiently small $\delta\in (0,0.5) $, such that for a.e. $x \in \mathbb{A}$, any $n \ge N_{x, \delta},$\[\Big|\frac{\sum_{i \le n}\phi \circ \sigma^i(x)}{n}\Big|\le n^{-(0.5-\delta)},\]
where $N_{x,\delta}:= \sum_{N\ge 0} \mathbbm{1}_{\bigcup_{n\ge N}|\sum_{i \le n-1}\phi \circ \sigma^i(x)|\ge n^{0.5+\delta}}$ satisfies the relation $||N_{x,\delta}||_1$ \[\le 2\exp \Big(\frac{(1-z_0^{-0.5})(1-a-\epsilon)(\ln \sqrt{z_0})}{36||\phi||z_0^{0.5}}\Big)\sum_{m\ge 1}m\exp\Big(-\frac{m^{2\delta} (1-z_0^{-0.5})^2(1-a-\epsilon)^2(\ln \sqrt{z_0})^2}{72||\phi||^2}\Big),\]
 and  $a, \epsilon, z_0$ are the ones in Theorem \ref{correlation}.
\end{theorem}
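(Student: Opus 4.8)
The plan is to read this as a quantitative strong law of large numbers and to obtain it from a Borel--Cantelli argument fed by the large-deviation estimate of Theorem~\ref{deviation}. Write $S_n:=\sum_{i\le n-1}\phi\circ\sigma^i$ and set $B_n:=\{x:|S_n(x)|\ge n^{0.5+\delta}\}$ and $A_N:=\bigcup_{n\ge N}B_n$, so that $N_{x,\delta}=\sum_{N\ge 0}\mathbbm{1}_{A_N}(x)$. The key structural observation is that the events $A_N$ are nested and decreasing in $N$; hence, wherever $N_{x,\delta}$ is finite, it equals the last index $N$ at which some exceedance $|S_n|\ge n^{0.5+\delta}$ with $n\ge N$ still occurs. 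Consequently, for every $n\ge N_{x,\delta}$ one has $x\notin B_n$, i.e. $|S_n(x)|<n^{0.5+\delta}$, which is exactly $|S_n(x)|/n<n^{-(0.5-\delta)}$, the asserted almost-sure bound (up to the harmless reindexing between $\sum_{i\le n}$ and $\sum_{i\le n-1}$). Thus the whole statement reduces to controlling $\|N_{x,\delta}\|_1$ and, in particular, to showing it is finite.

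First I would compute, by Tonelli, $\|N_{x,\delta}\|_1=\mathbb{E}N_{x,\delta}=\sum_{N\ge 0}\mu_{\phi_p}(A_N)$. Applying the union bound $\mu_{\phi_p}(A_N)\le\sum_{n\ge N}\mu_{\phi_p}(B_n)$ and interchanging the two sums---each $n$ being counted once for every $N\in\{0,1,\dots,n\}$---yields $\|N_{x,\delta}\|_1\le\sum_{n\ge 1}(n+1)\,\mu_{\phi_p}(B_n)$. One must start the union at $n\ge 1$: at $n=0$ the empty Birkhoff sum makes $B_0$ the full space, so the convention has to exclude it. This already exhibits the polynomial weight that will become the factor $m$ in the statement.

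Next I would feed in Theorem~\ref{deviation}. The event $B_n$ is precisely $\{|S_n|/n\ge u\}$ with $u=n^{-(0.5-\delta)}=n^{\delta-0.5}$, so that $u^2n=n^{2\delta}$ and the Gaussian exponent turns into $-\,n^{2\delta}(1-z_0^{-0.5})^2(1-a-\epsilon)^2(\ln\sqrt{z_0})^2/(72||\phi||^2)$. Since $u=n^{\delta-0.5}\le 1$ for $n\ge 1$ and the exponent of the prefactor in Theorem~\ref{deviation} is positive, the prefactor is bounded uniformly in $n$ by its value at $u=1$, namely the constant $\exp\big((1-z_0^{-0.5})(1-a-\epsilon)(\ln\sqrt{z_0})/(36||\phi||z_0^{0.5})\big)$. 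Pulling this $n$-independent constant out of the sum and relabelling the summation index then delivers exactly the claimed bound $\|N_{x,\delta}\|_1\le 2\exp(\cdots)\sum_{m\ge 1}m\exp(-\cdots m^{2\delta})$.

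Finally, the series $\sum_{m\ge 1}m\exp(-c\,m^{2\delta})$ converges for every $\delta>0$, because the stretched-exponential factor dominates the polynomial weight; hence $\|N_{x,\delta}\|_1<\infty$ and $N_{x,\delta}<\infty$ for a.e. $x$, which both legitimizes the phrase ``for any $n\ge N_{x,\delta}$'' and closes the almost-sure estimate from the first paragraph. I do not expect a deep obstacle here: the one place demanding care is the bookkeeping in the double-sum interchange together with the $n\leftrightarrow m$ reindexing, so that the polynomial weight lands on precisely $\sum_{m\ge 1}m\exp(-\cdots m^{2\delta})$, and the uniform handling of the prefactor via $u\le 1$ rather than producing a weaker constant.
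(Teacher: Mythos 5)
Your proposal is correct and follows essentially the same route as the paper: apply Theorem~\ref{deviation} with $u=n^{-(0.5-\delta)}$, bound the prefactor using $u\le 1$, take a union bound over $n\ge N$, sum over $N$ via Tonelli to get the polynomially weighted series, and conclude by a Borel--Cantelli-type argument. Your explicit handling of the $n=0$ term, the uniform bound on the prefactor, and the double-sum interchange merely makes precise bookkeeping that the paper leaves implicit.
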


\section{Proofs}
\begin{proof}[A proof of Theorem \ref{correlation}]
Consider  a topological Markov shift in Definition \ref{shift}.  Recall now  the Theorem 1 of \cite{max}, which says that 
\[|\int f \cdot P^n (\phi-\mathbb{E}\phi) d\mu_{\phi_p}| \le ||f||_1 \frac{||\phi||}{||\phi_p||} \sum_{k=0}^{n}||\phi_p||\theta^k\gamma_{n-k}^{*},\]
therefore, 
\[||P^n (\phi-\mathbb{E}\phi)||_{\infty} \le ||\phi||\sum_{k=0}^{n}\theta^k\gamma_{n-k}^{*},\]
where $\gamma_{k}^{*}:=\mathbb{P}(S_k=0)$, $(S_n)_{n \ge 0}$ is a Markov chain with values in $\mathbb{N}\bigcup \{0\}$, which starts from $0$, and has transition probabilities $p_{k, k+1}=1-\gamma_k$, $p_{k,0}=\gamma_k$, and $\gamma_k:=1-\exp(-||\phi_p||\theta^k)$.

 We compute now $\gamma_k^{*}$. It follows from (A.3)-(A.5) of \cite{max} that 
 \[\sum_{k\ge 1}\mathbb{P}(\tau=k)<1,\] and
\[\sum_{k}\mathbb{P}(S_k=0)z_0^k=\frac{1}{1-\sum_{k\ge 1}z_0^k\mathbb{P}(\tau=k)} \text{ for some }z_0>1,\]
where \[\mathbb{P}(\tau=k)=\mathbb{P}(\inf\{n>0: S_n=0\}=k)= \prod_{i=0}^{k-2}p_{i,i+1} p_{k-1,0}\le ||\phi_p||\theta^{k-1},\]
\[\mathbb{P}(\tau=k)=\prod_{i=0}^{k-2}p_{i,i+1} p_{k-1,0} =\prod_{i=0}^{k-2}p_{i,i+1}-\prod_{i=0}^{k-1}p_{i,i+1}.\]

Choose now $a, z_0, U$ and $\epsilon$ as in the formulation of the Theorem \ref{correlation}, let $N=U^{-1}$. Then
\begin{align*}
    \sum_{i\ge 1}\mathbb{P}(\tau=i)z_0^i&=\sum_{i\ge N}\mathbb{P}(\tau=i)z_0^i+\sum_{i< N}\mathbb{P}(\tau=i)z_0^i\\
    &\le \sum_{i \ge N}\frac{||\phi_p||}{\theta}(z_0\theta)^i+\sum_{i <N}\mathbb{P}(\tau=i)z_0^i\\
    &\le \sum_{i \ge N}\frac{||\phi_p||}{\theta}(z_0\theta)^i+\sum_{i\ge 1}\mathbb{P}(\tau=i)z_0^N\\
    &\le \frac{||\phi_p||}{\theta}\frac{(z_0\theta)^N}{1-z_0\theta}+\Big[1-\exp\Big(-\frac{||\phi_p||}{1-\theta}\Big)\Big]z_0^N\le a +\epsilon.
\end{align*}

Therefore $\mathbb{P}(S_k=0)z_0^k \le \frac{1}{1-a-\epsilon}$, and \[||P^n (\phi-\mathbb{E}\phi)||_{\infty} \le ||\phi||\sum_{k=0}^{n}\theta^k\gamma_{n-k}^{*}\le ||\phi||\frac{nz_0^{-n}}{1-a-\epsilon} \le ||\phi||\frac{(\sqrt{z_0})^{-n}}{(1-a-\epsilon)\log \sqrt{z_0}},\]
which implies the statement of Theorem \ref{correlation}.
\end{proof}

\begin{proof}[Proof of Theorem \ref{clt}]
We adopt the martingale methods used in \cite{Sutams}. (See also \cite{martingale}, where martingale methods were applied to analysis of partially hyperbolic dynamical systems). Let $\mathcal{B}$ be the $\sigma$-algebra of $\mathbb{A}$, $\mathcal{F}_n:=\sigma^{-n}\mathcal{B}$ and \[H_n \circ \sigma^n:=\mathbb{E}\Big[\sum_{i \le n-1}\phi \circ \sigma^i|\mathcal{F}_n\Big],\]
\[\psi_n\circ \sigma^n:=\phi \circ \sigma^n+H_n\circ \sigma^n -H_{n+1}\circ \sigma^{n+1}\]
Then $H_n=\sum_{i \le n-1}P^{n-i}\phi$, $(\psi_n \circ \sigma^n)_{n \ge 0}$ are reverse martingale differences and
\begin{equation}\label{1}
    \sum_{n \le N}\psi_n \circ \sigma^n=\sum_{n \le N}\phi \circ \sigma^n-H_{N+1}\circ \sigma^{N+1}.
\end{equation}

By applying Theorem \ref{correlation}, we get, for any $n\ge 1$, that \begin{equation}\label{2}
    ||P^n\phi||_{\infty}\le ||\phi||, \quad \sup_n||H_n||_{\infty}\le ||\phi||\frac{z_0^{-0.5}}{(1-z_0^{-0.5})(1-a-\epsilon)\ln \sqrt{z_0}},
\end{equation}\begin{equation}\label{3}
    ||\psi_n||_{\infty}\le ||\phi||_{\infty}+||H_n||_{\infty}+||H_{n+1}\circ \sigma||_{\infty}\le \frac{3||\phi||}{(1-z_0^{-0.5})(1-a-\epsilon)\ln \sqrt{z_0}},
\end{equation}

\[\Sigma^2=\lim_{N\to \infty}\frac{\mathbb{E}(\sum_{n \le N}\phi \circ \sigma^n)^2}{N}=\lim_{N \to \infty}\frac{\sum_{n \le 
N}\mathbb{E}\psi_n^2\circ \sigma^n}{N},\]
\[\Big|\frac{\mathbb{E}(\sum_{n \le N}\phi \circ \sigma^n)^2}{N}-\frac{\sum_{n \le 
N}\mathbb{E}\psi_n^2\circ \sigma^n}{N}\Big|=\frac{\mathbb{E}H^2_{N+1}}{N}\le \frac{||\phi||^2z_0^{-1}}{N(1-z_0^{-0.5})^2(1-a-\epsilon)^2(\ln \sqrt{z_0})^2}.\]

By making use of Theorem \ref{correlation}, and of the argument in the proof of Corollary 3.10 in \cite{Sutams}, we obtain the following relations \[\Big|\frac{\mathbb{E}(\sum_{n \le N}\phi \circ \sigma^n)^2}{N}-\Sigma^2\Big| \le ||\phi||^2\frac{z_0^{-0.5}}{N(1-z_0^{-0.5})^2(1-a-\epsilon)\ln \sqrt{z_0}}+\frac{z_0^{-0.5(N+1)}||\phi||^2}{(1-z_0^{-0.5})(1-a-\epsilon)\ln \sqrt{z_0}},\]
\begin{equation}\label{4}
    \Big|\frac{\sum_{n \le N}\mathbb{E}\psi_n^2 \circ \sigma^n}{N}-\Sigma^2\Big| \le \frac{2||\phi||^2z_0^{-0.5}}{N(1-z_0^{-0.5})^2(1-a-\epsilon)^2(\ln \sqrt{z_0})^2}+\frac{z_0^{-0.5(N+1)}||\phi||^2}{(1-z_0^{-0.5})(1-a-\epsilon)\ln \sqrt{z_0}}.
\end{equation}

Next we address a regularity of $\psi_n$. One has
\begin{align*}
    &|(P^n\phi)(x_1)-(P^n\phi)(x_2)|=\big|\sum_{\sigma^ny_1=x_1} e^{\sum_{i \le n-1} \phi_p(\sigma^i y_1)}\phi(y_1)-\sum_{\sigma^ny_2=x_2} e^{\sum_{i \le n-1} \phi_p(\sigma^i y_2)}\phi(y_2)\big|\\
    &=\big|\sum e^{\sum_{i \le n-1} \phi_p(\sigma^i y_2)}\phi(y_1)[e^{\sum_{i \le n-1} \phi_p(\sigma^i y_1)-\phi_p(\sigma^i y_2)}-1]+\sum e^{\sum_{i \le n-1} \phi_p(\sigma^i y_2)}[\phi(y_1)-\phi(y_2)]\big|\\
     &\le |\sum e^{\sum_{i \le n-1} \phi_p(\sigma^i y_2)}\phi(y_1)[e^{\sum_{i \le n-1} \phi_p(\sigma^i y_1)-\phi_p(\sigma^i y_2)}-1]|+||\phi||\theta^n d(x_1,x_2)\\
     &\le ||\phi||[e^{||\phi_p||d(x_1,x_2)\theta(1-\theta)^{-1}}-1]+||\phi||\theta^n d(x_1,x_2)\\
     &\le ||\phi||[e^{||\phi_p||\theta(1-\theta)^{-1}}||\phi_p||\theta(1-\theta)^{-1}+\theta^n]d(x_1,x_2)\le 2\theta||\phi||e^{||\phi_p||\theta(1-\theta)^{-1}}||\phi_p||(1-\theta)^{-1}d(x_1,x_2).
\end{align*}

Therefore, $||P^n\phi||\le 2||\phi||e^{||\phi_p||\theta(1-\theta)^{-1}}||\phi_p||(1-\theta)^{-1}$, and

\[||H_n||\le 2n||\phi||e^{||\phi_p||\theta(1-\theta)^{-1}}||\phi_p||(1-\theta)^{-1},\]
\begin{equation}\label{5}
    ||\psi_n||\le ||\phi||+||H_n||+||H_{n+1}\circ \sigma||\le (4n+3)||\phi||e^{||\phi_p||\theta(1-\theta)^{-1}}||\phi_p||(1-\theta)^{-1}.
\end{equation}
\begin{notation}
    In order to simplify the proofs, we will write in what follows $f=O_C(g)$  instead of $|f|\le C |g|$.
\end{notation}
Next we estimate the following expression $$\Big|\mathbb{E}\exp\Big(it \frac{\sum_{0 \le i \le n-1}\phi\circ \sigma^i}{\sqrt{n}}\Big)-\exp\Big(-\frac{t^2\Sigma^2}{2}\Big)\Big|.$$ 

Consider the blocks $\{I_{i}: \mathbf{card}(I_i)=n^{0.8},  1\le i \le n^{0.2}\}$ of consecutive positive integers in $[0,n-1]$, and the blocks $\{I_{i, 1}, I_{i,2}: \mathbf{card}(I_{i,1})=n^{0.8}-n^{0.2}, \mathbf{card}(I_{i,2})=n^{0.2}\}$ of consecutive positive integers in each $I_{i}$, which have no gaps between the blocks.  Let  $X_{i}:= \sum_{k \in I_{i,1}} \psi_k \circ \sigma^k$, $\mathcal{F}_i=\sigma^{-in^{0.8}}\mathcal{B}$. Then, it follows from (\ref{1}) and (\ref{2}), that \begin{align*}
    \mathbb{E}\exp\Big(&it \frac{\sum_{0 \le i \le n-1}\phi\circ \sigma^i}{\sqrt{n}}\Big)=\mathbb{E}\exp\Big(it \frac{\sum_{0 \le i \le n-1}\psi_i\circ \sigma^i}{\sqrt{n}}\Big)+O_1\Big(\frac{|t|||\phi||z_0^{-0.5}}{n^{0.5}(1-z_0^{-0.5})(1-a-\epsilon)\ln \sqrt{z_0}}\Big)\\
    &=\exp\Big(-\frac{t^2\sum_{i \le n-1}\mathbb{E}\psi_i^2\circ \sigma^i}{2n}\Big)+ \mathbb{E}\exp\Big(it \frac{\sum_{0 \le i \le n-1}\psi_i\circ \sigma^i}{\sqrt{n}}\Big)\\
    &\quad -\exp\Big(-\frac{t^2\sum_{i \le n-1}\mathbb{E}\psi_i^2\circ \sigma^i}{2n}\Big)+O_1\Big(\frac{|t|||\phi||z_0^{-0.5}}{n^{0.5}(1-z_0^{-0.5})(1-a-\epsilon)\ln \sqrt{z_0}}\Big).
    \end{align*}
    
    By making use of (\ref{3}), we can now continue the estimates as 
    \begin{align*}
&=\exp\Big(-\frac{t^2\sum_{i \le n-1}\mathbb{E}\psi_i^2\circ \sigma^i}{2n}\Big)+\mathbb{E}\exp\Big(it\frac{\sum_{i=1}^{n^{0.2}}X_i}{\sqrt{n}}\Big)-\exp\Big(-\frac{t^2\sum_{i=1}^{n^{0.2}}\mathbb{E}X_i^2}{2n}\Big)\\
&\quad +O_3\Big(\frac{|t|n^{0.4}||\phi||}{n^{0.5}(1-z_0^{-0.5})(1-a-\epsilon)\ln \sqrt{z_0}}\Big)+O_{5}\Big(\frac{|t|^2n^{0.6}||\phi||^2}{n(1-z_0^{-0.5})^2(1-a-\epsilon)^2(\ln \sqrt{z_0})^2}\Big)\\
&\quad+O_1\Big(\frac{|t|||\phi||z_0^{-0.5}}{n^{0.5}(1-z_0^{-0.5})(1-a-\epsilon)\ln \sqrt{z_0}}\Big)\\
&=\exp\Big(-\frac{t^2\sum_{i \le n-1}\mathbb{E}\psi_i^2\circ \sigma^i}{2n}\Big)+\mathbb{E}\exp\Big(it\frac{\sum_{i=1}^{n^{0.2}}X_i}{\sqrt{n}}\Big)-\exp\Big(-\frac{t^2\sum_{i=1}^{n^{0.2}}\mathbb{E}X_i^2}{2n}\Big)\\
&\quad +O_9\Big(\frac{|t|^2||\phi||^2}{n^{0.1}(1-z_0^{-0.5})^2(1-a-\epsilon)^2(\ln \sqrt{z_0})^2}\Big).
\end{align*}

We will use the Taylor expansion, as in the proof of Corollary 3.1 of \cite{Sutams}, and apply the Burkholder–Davis–Gundy inequalities to $X_i$, we can further continue the estimates as
\begin{align*}
&=\exp\Big(-\frac{t^2\sum_{i \le n-1}\mathbb{E}\psi_i^2\circ \sigma^i}{2n}\Big)+\sum_{i=1}^{n^{0.2}}O_1\Big(\mathbb{E}\Big|\frac{tX_i}{n^{0.5}}\Big|^3+\Big|\frac{t^2\mathbb{E}X^2_i}{n}\Big|^2\Big)
\\
&\quad +\sum_{i=1}^{n^{0.2}}O_1\Big(\frac{t^2\mathbb{E}|\mathbb{E}[X^2_i-\mathbb{E}X^2_i|\mathcal{F}_i]|}{2n}\Big)+O_9\Big(\frac{|t|^2||\phi||^2}{n^{0.1}(1-z_0^{-0.5})^2(1-a-\epsilon)^2(\ln \sqrt{z_0})^2}\Big)\\
&=\exp\Big(-\frac{t^2\sum_{i \le n-1}\mathbb{E}\psi_i^2\circ \sigma^i}{2n}\Big)+\sum_{i=1}^{n^{0.2}}O_{64}\Big(\mathbb{E}(\sum_{k \in I_{i,1}}\psi_k^2\circ \sigma^k)^{1.5}\Big|\frac{t}{n^{0.5}}\Big|^3+\Big|\frac{t^2\mathbb{E}X^2_i}{n}\Big|^2\Big)
\\
&\quad +\sum_{i=1}^{n^{0.2}}O_1\Big(\frac{t^2\mathbb{E}|\mathbb{E}[X^2_i-\mathbb{E}X^2_i|\mathcal{F}_i]|}{2n}\Big)+O_9\Big(\frac{|t|^2||\phi||^2}{n^{0.1}(1-z_0^{-0.5})^2(1-a-\epsilon)^2(\ln \sqrt{z_0})^2}\Big).
\end{align*}

Now, from (\ref{3}) and (\ref{4}), we get that the previous expression equals 
\begin{align*}
&=\exp\Big(-\frac{t^2\sum_{i \le n-1}\mathbb{E}\psi_i^2\circ \sigma^i}{2n}\Big)+\sum_{i=1}^{n^{0.2}}O_{6912}\Big(\frac{|t|^4||\phi||^4}{n^{0.3}(1-z_0^{-0.5})^4(1-a-\epsilon)^4(\ln \sqrt{z_0})^4}\Big)\\
&\quad+\sum_{i=1}^{n^{0.2}}O_1\Big(\frac{t^2\mathbb{E}|\mathbb{E}[X^2_i-\mathbb{E}X^2_i|\mathcal{F}_i]|}{2n}\Big)+O_9\Big(\frac{|t|^2||\phi||^2}{n^{0.1}(1-z_0^{-0.5})^2(1-a-\epsilon)^2(\ln \sqrt{z_0})^2}\Big)\\
 &=\exp\Big(-\frac{t^2\sum_{i \le n-1}\mathbb{E}\psi_i^2\circ \sigma^i}{2n}\Big)+\sum_{i=1}^{n^{0.2}}O_1\Big(\frac{t^2\mathbb{E}|\mathbb{E}[X^2_i-\mathbb{E}X^2_i|\mathcal{F}_i]|}{n}\Big)\\
 &\quad +O_{6921}\Big(\frac{|t|^4||\phi||^4}{n^{0.1}(1-z_0^{-0.5})^4(1-a-\epsilon)^4(\ln \sqrt{z_0})^4}\Big)\\
  &=\exp\Big(-\frac{t^2\Sigma^2}{2}\Big)+O_{0.5}\Big(\frac{2t^2||\phi||^2z_0^{-0.5}}{n(1-z_0^{-0.5})^2(1-a-\epsilon)^2(\ln \sqrt{z_0})^2}+\frac{t^2z_0^{-0.5n}||\phi||^2}{(1-z_0^{-0.5})(1-a-\epsilon)\ln \sqrt{z_0}}\Big)\\
&\quad+O_{6921}\Big(\frac{|t|^4||\phi||^4}{n^{0.1}(1-z_0^{-0.5})^4(1-a-\epsilon)^4(\ln \sqrt{z_0})^4}\Big)+\sum_{i=1}^{n^{0.2}}O_1\Big(\frac{t^2\mathbb{E}|\mathbb{E}[X^2_i-\mathbb{E}X^2_i|\mathcal{F}_i]|}{n}\Big).
\end{align*}

Now, by making use of (\ref{5}), Theorem \ref{correlation}, and the fact that $(\psi_i \circ \sigma^i)_{i \ge 0}$ are reverse martingale differences, we can continue the estimates as
\begin{align*}
&=\exp\Big(-\frac{t^2\Sigma^2}{2}\Big)+O_{6922}\Big(\frac{|t|^4||\phi||^4}{n^{0.1}(1-z_0^{-0.5})^4(1-a-\epsilon)^4(\ln \sqrt{z_0})^4}+\frac{t^2z_0^{-0.5n}||\phi||^2}{(1-z_0^{-0.5})(1-a-\epsilon)\ln \sqrt{z_0}}\Big)\\
&\quad+\sum_{i=1}^{n^{0.2}}O_1\Big(\frac{\sum_{k\in I_{i,1}}t^2\mathbb{E}|\mathbb{E}[\psi_k^2\circ \sigma^k-\mathbb{E}\psi^2_k\circ \sigma^k|\mathcal{F}_i]|}{n}\Big)\\
&=\exp\Big(-\frac{t^2\Sigma^2}{2}\Big)+O_{6922}\Big(\frac{|t|^4||\phi||^4}{n^{0.1}(1-z_0^{-0.5})^4(1-a-\epsilon)^4(\ln \sqrt{z_0})^4}+\frac{t^2z_0^{-0.5n}||\phi||^2}{(1-z_0^{-0.5})(1-a-\epsilon)\ln \sqrt{z_0}}\Big)\\
&\quad +\sum_{i=1}^{n^{0.2}}\sum_{k \in I_{i,1}}O_1\Big(t^2||\psi_k^2||\frac{(\sqrt{z_0})^{-n^{0.2}}}{(1-a-\epsilon)\ln \sqrt{z_0}}\Big)\\
&=\exp\Big(-\frac{t^2\Sigma^2}{2}\Big)+O_{6922}\Big(\frac{|t|^4||\phi||^4}{n^{0.1}(1-z_0^{-0.5})^4(1-a-\epsilon)^4(\ln \sqrt{z_0})^4}+\frac{t^2z_0^{-0.5n}||\phi||^2}{(1-z_0^{-0.5})(1-a-\epsilon)\ln \sqrt{z_0}}\Big)\\
&\quad +O_{64}\Big(\frac{t^2n^3||\phi||^2e^{2||\phi_p||\theta(1-\theta)^{-1}}||\phi_p||^2(\sqrt{z_0})^{-n^{0.2}}}{(1-\theta)^2(1-a-\epsilon)\ln \sqrt{z_0}}\Big),
\end{align*}which concludes a proof.
\end{proof}
\begin{proof}[ A proof of Theorem \ref{deviation}]
We will apply the result of \cite{azuma} to $(\psi_i\circ \sigma^i)_{i \ge 0}$: \[\mu_{\phi_p}\Big(\Big|\frac{\sum_{i \le n-1}\psi_i \circ \sigma^i}{n}\Big|\ge u/2\Big)\le 2\exp\Big(-\frac{u^2n}{8\sup_i||\psi_i \circ \sigma^i||^2_{\infty}}\Big).\] If $n> \frac{2}{u}||\phi||\frac{z_0^{-0.5}}{(1-z_0^{-0.5})(1-a-\epsilon)\ln \sqrt{z_0}}$, then, by (\ref{1}), (\ref{2}) and (\ref{3}),
 we have that

\begin{align*}\mu_{\phi_p}\Big(\Big|\frac{\sum_{i \le n-1}\phi \circ \sigma^i}{n}\Big|\ge u\Big)&\le \mu_{\phi_p}\Big(\Big|\frac{\sum_{i \le n-1}\psi_i \circ \sigma^i}{n}\Big|\ge u/2\Big)+\mu_{\phi_p}\Big(\Big|\frac{H_{n}\circ \sigma^{n}}{n}\Big|\ge u/2\Big)\\
& =\mu_{\phi_p}\Big(\Big|\frac{\sum_{i \le n-1}\psi_i \circ \sigma^i}{n}\Big|\ge u/2\Big)\\
&\le 2\exp\Big(-\frac{u^2n}{8\sup_i||\psi_i \circ \sigma^i||^2_{\infty}}\Big)\\
&\le 2\exp\Big(-\frac{u^2n (1-z_0^{-0.5})^2(1-a-\epsilon)^2(\ln \sqrt{z_0})^2}{72||\phi||^2}\Big).
 \end{align*}
Hence, if $n \ge 1$, then \begin{align*}&\mu_{\phi_p}\Big(\Big|\frac{\sum_{i \le n-1}\phi \circ \sigma^i}{n}\Big|\ge u\Big)\\
&\le 2\exp \Big(\frac{u(1-z_0^{-0.5})(1-a-\epsilon)(\ln \sqrt{z_0})}{36||\phi||z_0^{0.5}}\Big)\exp\Big(-\frac{u^2n (1-z_0^{-0.5})^2(1-a-\epsilon)^2(\ln \sqrt{z_0})^2}{72||\phi||^2}\Big).
\end{align*}
\end{proof}
\begin{proof}[A proof of Theorem \ref{largenumber}]
Take now any $\delta>0$, and choose $u=n^{-(0.5-\delta)}$,
\begin{align*}&\mu_{\phi_p}\Big(\Big|\frac{\sum_{i \le n-1}\phi \circ \sigma^i}{n}\Big|\ge n^{-(0.5-\delta)}\Big)\\
&\le 2\exp \Big(\frac{(1-z_0^{-0.5})(1-a-\epsilon)(\ln \sqrt{z_0})}{36||\phi||z_0^{0.5}}\Big)\exp\Big(-\frac{n^{2\delta} (1-z_0^{-0.5})^2(1-a-\epsilon)^2(\ln \sqrt{z_0})^2}{72||\phi||^2}\Big),\\
&\mu_{\phi_p}\Big(\bigcup_{n\ge N}\Big|\frac{\sum_{i \le n-1}\phi \circ \sigma^i}{n}\Big|\ge n^{-(0.5-\delta)}\Big)\\
&\le 2\exp \Big(\frac{(1-z_0^{-0.5})(1-a-\epsilon)(\ln \sqrt{z_0})}{36||\phi||z_0^{0.5}}\Big)\sum_{n \ge N}\exp\Big(-\frac{n^{2\delta} (1-z_0^{-0.5})^2(1-a-\epsilon)^2(\ln \sqrt{z_0})^2}{72||\phi||^2}\Big).
\end{align*}

Let $N_{x,\delta}:= \sum_{N\ge 0} \mathbbm{1}_{\bigcup_{n\ge N}|\sum_{i \le n-1}\phi \circ \sigma^i(x)|\ge n^{0.5+\delta}}$. Then, by the Borel-Cantelli lemma, for any $n \ge N_{x, \delta}$, \[\Big|\frac{\sum_{i \le n-1}\phi \circ \sigma^i}{n}\Big|\le n^{-(0.5-\delta)},\]
and  $||N_{x,\delta}||_1$
\begin{align*}&\le  2\exp \Big(\frac{(1-z_0^{-0.5})(1-a-\epsilon)(\ln \sqrt{z_0})}{36||\phi||z_0^{0.5}}\Big)\sum_{N\ge 0}\sum_{n \ge N}\exp\Big(-\frac{n^{2\delta} (1-z_0^{-0.5})^2(1-a-\epsilon)^2(\ln \sqrt{z_0})^2}{72||\phi||^2}\Big)\\
&\le 2\exp \Big(\frac{(1-z_0^{-0.5})(1-a-\epsilon)(\ln \sqrt{z_0})}{36||\phi||z_0^{0.5}}\Big)\sum_{n\ge 0}n\exp\Big(-\frac{n^{2\delta} (1-z_0^{-0.5})^2(1-a-\epsilon)^2(\ln \sqrt{z_0})^2}{72||\phi||^2}\Big),
\end{align*}which concludes a proof.
\end{proof}

\section{Applications}\label{app}
In this section  we consider applications of the obtained results to some uniformly hyperbolic linear maps $f\in SL(2d, \mathbb{Z})$ of $2d-$dim flat torus ($d\ge 1$). These maps can be lifted to a topological Markov shift $(\mathbb{A}, \sigma, \mu_{\phi_p})$. We assume also that $f$ is a symmetric matrix with the eigenvalues $\lambda_1\ge \lambda_2 \ge \cdots \ge \lambda_d>1> \lambda_{d+1}\ge \lambda_{d+2} \ge \cdots \ge \lambda_{2d}>0$. 

Thanks to the linearity, the Radon–Nikodym derivative along unstable manifolds (which are in this case linear sub-spaces) $\det D^uf=\exp(-\phi_p)$ is constant, and $||\phi_p||_{\infty}=|\ln \det D^uf|$. Since $f$ is symmetric, then $||\phi_p||_{\infty} =|\ln \det D^uf| =\sum_{i=1}^d\ln \lambda_i$. We choose $||\phi_p||= \max\{1, \sum_{i=1}^d\ln \lambda_i\}$, and take a value of $\theta$ in Definition \ref{shift} equal to $\lambda_d^{-1}$. Now let $A_d=\bigoplus_{i=1}^{d} \begin{pmatrix} 2 & 1 \\ 1 & 1 \end{pmatrix}: \bigoplus_{i=1}^{d}\mathbb{T}^2 \to \bigoplus_{i=1}^{d}\mathbb{T}^2$ where $\bigoplus_{i=1}^{d}\mathbb{T}^2$ is a $2d$-dimensional torus.
\begin{Example}
Let $f=A_dB_dA_d$, where $B_d=\begin{pmatrix} 1 & \textbf{0}& 1 \\\textbf{0} & A_{d-1}& \textbf{0} \\ 1 &\textbf{0} & 2 \end{pmatrix}$. By making use of the argument above, one can find $||\phi_p||$ and $\theta$. The eigenvalues of $f$ are  
\[[(9+6\cos(2\pi j/d)) \pm \sqrt{(9+6\cos(2\pi j/d))^2 -4}]/2,\]
$j=1,2,\cdots,d$. Then $\theta=(9 -\sqrt{77})/2$, $||\phi_p||=d\ln \frac{15+\sqrt{221}}{2}$. By Theorem \ref{clt},

 \[\Big|\mathbb{E}\exp\Big(it \frac{\sum_{0 \le i \le n-1}\phi\circ \sigma^i}{\sqrt{n}}\Big)-\exp\Big(-\frac{t^2\Sigma^2}{2}\Big)\Big|\le C_{t,n}, \]
    where\begin{align*}C_{t,n}=&6922\Big(\frac{|t|^4||\phi||^4}{n^{0.1}(1-z_0^{-0.5})^4(1-a-\epsilon)^4(\ln \sqrt{z_0})^4}+\frac{t^2z_0^{-0.5n}||\phi||^2}{(1-z_0^{-0.5})(1-a-\epsilon)\ln \sqrt{z_0}}\Big)\\
    &+64\Big(\frac{t^2n^3||\phi||^2e^{2d\ln(\frac{15+\sqrt{221}}{2})\frac{9-\sqrt{77}}{\sqrt{77}-7}}d^2\ln^2 \frac{15+\sqrt{221}}{2}(\sqrt{z_0})^{-n^{0.2}}}{(\frac{\sqrt{77}-7}{2})^2(1-a-\epsilon)\ln \sqrt{z_0}}\Big),
    \end{align*} 

and \[\epsilon \in (0, \min\{1-a, \frac{\sqrt{77}-7}{2}\}), \quad a=1-\exp(-d\ln \frac{15+\sqrt{221}}{\sqrt{77}-7}), \]
\[z_0 \in \Big(1, \min\Big\{\Big(\frac{a+\epsilon/2}{a}\Big)^{U}, \frac{9-\sqrt{77}+2\epsilon}{9-\sqrt{77}}, \frac{2-2\epsilon}{9-\sqrt{77}}, e^2\Big\}\Big), \quad U:=\frac{\log (1-\epsilon)}{\log [\frac{9-\sqrt{77}}{2} \epsilon(\frac{\sqrt{77}-7}{2}-\epsilon)]-\log (2||\phi_p||)}\] 

If $d=3$, then $a=0.9999$, $\epsilon=5\times 10^{-5}$, $z_0=1.00000000083$. Therefore, the central limit theorem for $f$ holds, and

\[\Big|\mathbb{E}\exp\Big(it \frac{\sum_{0 \le i \le n-1}\phi\circ \sigma^i}{\sqrt{n}}\Big)-\exp\Big(-\frac{t^2\Sigma^2}{2}\Big)\Big|\le 357.15265\times 10^{56}\times t^4||\phi||^4 n^{-0.1}.\]

By Theorem \ref{deviation}, the large deviation estimate for $f$ is given via the following relation \[\mu_{\phi_p}\Big(\Big|\frac{\sum_{i \le n-1}\phi \circ \sigma^i}{n}\Big|\ge u\Big) \le 2\exp \Big(\frac{u\times 5.76388936\times10^{-16}}{||\phi||}\Big)\exp\Big(-5.9800357\times10^{-30}\times \frac{u^2n }{||\phi||^2}\Big).\]

By Theorem \ref{largenumber} in the law of large numbers we have for any $n \ge N_{x, \delta},$\[\Big|\frac{\sum_{i \le n}\phi \circ f^i(x)}{n}\Big|\le n^{-(0.5-\delta)},\]
where $N_{x,\delta}:= \sum_{N\ge 0} \mathbbm{1}_{\bigcup_{n\ge N}|\sum_{i \le n-1}\phi \circ f^i(x)|\ge n^{0.5+\delta}}$ satisfies the relation \[ ||N_{x,\delta}||_1 \le 2\exp \Big(\frac{u\times 5.76388936\times10^{-16}}{||\phi||}\Big) \sum_{n\ge 1}n\exp\Big(-5.9800357\times10^{-30}\times \frac{n^{2\delta} }{||\phi||^2}\Big)< \infty.\]

\end{Example}
\begin{acknowledgements}
We thank S. Vempala for the construction of the example in section \ref{app} and computing the eigenvalues. LB was partially supported by the NSF grant DMS-2054659.
\end{acknowledgements}

%
%

\medskip

\bibliography{bibtext}

\end{document}